    \renewcommand{\nomgroup}[1]{\medskip \item {\bf #1}}
  \renewcommand{\nomgroup}[1]{%
  \ifthenelse{\equal{#1}{A}}{\item[\textbf{Algebra}]}{%
  \ifthenelse{\equal{#1}{Co}}{\item[\textbf{Combinatorics}]}{%
  \ifthenelse{\equal{#1}{A}}{\item[\textbf{Arrangements}]}{%
  {}}}}}
  \CheckCommand*\refstepcounter[1]{\stepcounter{#1}%
      \protected@edef\@currentlabel
       {\csname p@#1\endcsname\csname the#1\endcsname}%
  }
  \renewcommand*\refstepcounter[1]{\stepcounter{#1}%
    \protected@edef\@currentlabel
      {\csname p@#1\expandafter\endcsname\csname the#1\endcsname}%
  }
  \def\labelformat#1{\expandafter\def\csname p@#1\endcsname##1}
  \DeclareRobustCommand\Ref[1]{\protected@edef\@tempa{\ref{#1}}%
     \expandafter\MakeUppercase\@tempa
  }
  \newcommand{\numberlike}[2]{%
     \expandafter\def\csname c@#1\endcsname{%
         \expandafter\csname c@#2\endcsname}%
  }
  \def\DefaultNumberTheoremWithin{section}
  \theoremstyle{plain}
  \newtheorem{Lemma}{Lemma}
     \numberwithin{Lemma}{\DefaultNumberTheoremWithin}
     \numberwithin{Claim}{\DefaultNumberTheoremWithin}
  \newtheorem{Theorem}{Theorem}
     \numberwithin{Theorem}{\DefaultNumberTheoremWithin}
  \newtheorem{Corollary}{Corollary}
     \numberwithin{Corollary}{\DefaultNumberTheoremWithin}
  \newtheorem{Proposition}{Proposition}
     \numberwithin{Proposition}{\DefaultNumberTheoremWithin}
     \numberwithin{Conjecture}{\DefaultNumberTheoremWithin}
  \theoremstyle{definition}
     \numberwithin{Definition}{\DefaultNumberTheoremWithin}
  \theoremstyle{definition}
     \numberwithin{Question}{\DefaultNumberTheoremWithin}
  \theoremstyle{definition}
     \numberwithin{Problem}{\DefaultNumberTheoremWithin}
  \theoremstyle{remark}
     \numberwithin{Remark}{\DefaultNumberTheoremWithin}
  \theoremstyle{remark}
     \numberwithin{Example}{\DefaultNumberTheoremWithin}
     \numberwithin{Case}{Lemma}
     \numberwithin{Step}{Lemma}
  \def\eqref{\ref}
  \newcommand{\mb}{\mathbb}
  \def\binomial(#1,#2){{#1\choose #2}}
\def\delplus(#1,#2,#3){\ensuremath{\pi}_{#1,#2,#3}}
\def\delminus(#1,#2){\ensuremath{\operatorname{\partial}}_{#1,#2}}
  \def\redword(#1){{\ensuremath{\tilde{#1}}}}
  \newcommand{\Tor}{\ensuremath{\operatorname{Tor}}}
  \newcommand{\dual}{{\circ}}
  \newcommand{\lcm}{\ensuremath{\operatorname{lcm}}}
  \newcommand{\homology}{\ensuremath{\operatorname{H}}}
  \newcommand{\pol}{\mathrm{pol}}
  \newcommand{\Supp}{\mathrm{supp}}
  \newcommand{\RR}{\mb{R}}
  \newcommand{\KK}{\mb{K}}
  \newcommand{\xx}{\mathbf{x}}
  \newcommand{\pp}{\mathfrak{p}}
  \newenvironment{note}[1][Note]
   {\bigskip\begin{center}\begin{boxedminipage}{4.5in}\setlength{\parindent}{1em}\noindent\textbf{#1. }}
   {\end{boxedminipage}\end{center}\bigskip}
\begin{document}


\title[The Golod property for products and high symbolic powers]{The Golod property for products and high symbolic powers of monomial ideals}


\author[S. A. Seyed Fakhari]{S. A. Seyed Fakhari}

\address{Department of Mathematical Sciences,
Sharif University of Technology, P.O. Box 11155-9415, Tehran, Iran.}

\email{fakhari@ipm.ir}

\author[V. Welker]{V. Welker}

\address{Fachbereich Mathematik und Informatik, Philipps-Universit\"at Marburg,
    35032 Marburg, Germany}

\email{welker@mathematik.uni-marburg.de}



\begin{abstract}
  We show that for any two proper monomial ideals $I$ and $J$ in the polynomial ring $S = k[x_1,\ldots, x_n]$ 
  the ring $S/IJ$ is Golod. We also show that if $I$ is squarefree then for large enough $k$ the quotient $S/I^{(k)}$ of $S$ 
  by the $k$\textsuperscript{th} symbolic power of $I$ is Golod. As an application we prove that the multiplication on the cohomology algebra of
  some classes of moment-angle complexes is trivial. 
\end{abstract}


\subjclass[2000]{05E45, 05D05}


\keywords{Monomial ideal, Golod ring, product ideal, symbolic power, strong gcd-condition, moment-angle complex}


\thanks{The first author acknowledges support from DAAD as a visiting PhD student at Philipps-Universit\"at Marburg}


\maketitle


\section{Introduction} \label{sec:intro}

  For a graded ideal $I$ in the polynomial ring $S = \KK[x_1,\ldots,x_n]$ in $n$ variables over the field $\KK$ the ring $S/I$ is called {\it Golod}
  if all Massey operations on the Koszul complex of $S/I$ with respect of $\xx = x_1,\ldots, x_n$ vanish. 
  The naming gives credit to Golod \cite{Golod}
  who showed that the vanishing of the Massey operations is equivalent to the equality case in the following coefficientwise inequality of power-series which was 
  first derived by Serre:
  \begin{eqnarray*}
     \sum_{i \geq 0} \dim_\KK \Tor_i^{S/I} (\KK,\KK) t^i  \leq \frac{(1+t)^n}{1-t \sum_{i \geq} \dim_\KK \Tor_i^S(S/I,\KK) t^i} 
  \end{eqnarray*}

  We refer the reader to \cite{Avramov} and \cite{GulliksenLevin} for further information on the Golod property
  and to \cite{BrunsHerzog} and \cite{HerzogHibi} for the basic concepts from commutative algebra underlying this paper. 
  We prove the following two results.

  \begin{Theorem}
     \label{thm:prod}
     Let $I$, $J$ be two monomial ideals in $S$ different from $S$. Then $S/IJ$ is Golod.
  \end{Theorem}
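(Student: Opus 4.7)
The plan is to verify for $IJ$ the \emph{strong gcd-condition} of Berglund--J\"ollenbeck, which is known to imply Golodness of monomial ideal quotients. Recall that a monomial ideal $M$ satisfies this condition if there is a linear order $\prec$ on $G(M)$ such that for every coprime pair $m_1, m_2 \in G(M)$ with $m_1 \prec m_2$, some $m_3 \in G(M) \setminus \{m_1\}$ satisfies $m_3 \prec m_2$ and $m_3 \mid m_1 m_2$. So the task reduces to exhibiting such an order on $G(IJ)$.

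To construct the order, I would fix arbitrary linear orders $<_I$ on $G(I)$ and $<_J$ on $G(J)$, and note that every minimal generator of $IJ$ factors (not necessarily uniquely) as $m = uv$ with $u \in G(I)$, $v \in G(J)$. For each $m \in G(IJ)$ let $\pi(m) = (u_m, v_m)$ be the lex-smallest such pair, and declare $m \prec m'$ on $G(IJ)$ iff $\pi(m) <_{\mathrm{lex}} \pi(m')$. Now take coprime $m_1, m_2 \in G(IJ)$ with $m_1 \prec m_2$, write $\pi(m_i) = (u_i, v_i)$, and observe that $\gcd(m_1, m_2) = 1$ forces each of $\gcd(u_1,u_2)$, $\gcd(v_1,v_2)$, $\gcd(u_1,v_2)$, $\gcd(u_2,v_1)$ to equal $1$. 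Since $u_1 \ne u_2$ (as $I \subsetneq S$), the condition $m_1 \prec m_2$ gives $u_1 <_I u_2$. I would then consider the cross monomial $u_1 v_2 \in IJ$, let $w \in G(IJ)$ be any minimal generator dividing it, and verify (a) $w \ne m_1$ (otherwise $v_1 \mid v_2$, contradicting coprimality) and (b) $w \prec m_2$.

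The main obstacle is part (b), i.e.\ showing $\pi(w) <_{\mathrm{lex}} (u_2, v_2)$. Because $\gcd(u_1, v_2) = 1$, the monomial $w$ splits uniquely as $w = a \cdot b$ with $a \mid u_1$ and $b \mid v_2$, and the task is to use this splitting to exhibit a factorization of $w$ in $G(I) \cdot G(J)$ whose $I$-factor is $\le_I u_1$ (hence strictly $<_I u_2$). This is the delicate combinatorial core of the argument: $w \in G(IJ)$ certainly admits some factorization $u' v'$ with $u' \in G(I),\, v' \in G(J)$, but a priori no such factorization need respect the bound $u' \mid u_1$. I anticipate resolving this by further refining the choice of $<_I$ and $<_J$ (for instance, requiring $<_I$ to extend the divisibility order on monomials, or to be the restriction of a graded monomial order on $S$) so that the lex-smallest factorization of $w$ is controlled by the canonical splitting $w = ab$. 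Once this is in place, $\pi(w) \le_{\mathrm{lex}} (u_1, v_2) <_{\mathrm{lex}} (u_2, v_2) = \pi(m_2)$ yields $w \prec m_2$, completing the verification of the strong gcd-condition and hence the Golodness of $S/IJ$.
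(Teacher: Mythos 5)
Your reduction to the strong gcd-condition of Berglund--J\"ollenbeck is exactly the paper's strategy (your formulation of the condition is the order-reversal of the standard one, which is harmless), and your choice of candidate witness --- a minimal generator $w$ dividing the cross monomial $u_1v_2$ --- is also the right instinct. But the proof has a genuine gap, and it is precisely the one you flag yourself: you cannot show $\pi(w) <_{\mathrm{lex}} \pi(m_2)$. The obstruction is not the choice of $<_I$ and $<_J$. A minimal generator $w\in G(IJ)$ with $w\mid u_1v_2$ factors as $w=u'v'$ with $u'\in G(I)$, $v'\in G(J)$, but $u'$ only divides $u_1v_2$, not $u_1$: it may involve variables of $v_2$. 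So neither ``$<_I$ extends divisibility'' nor ``$<_I$ restricts a graded monomial order'' lets you bound $u'$ against $u_1$, and the canonical coprime splitting $w=ab$ with $a\mid u_1$, $b\mid v_2$ need not be realized by any factorization through $G(I)\cdot G(J)$. As written, the argument does not close.

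The paper avoids this entirely by ordering $G(IJ)$ not via factorizations but by \emph{decreasing total degree}, refined by a fixed monomial order $<$ on ties. With that order the verification never needs to know how $w$ factors: the cross monomial $w'$ (either $v_1u_2$ or $u_1v_2$, chosen according to which of $\deg u_1\gtrless\deg v_1$ etc.\ holds) satisfies $\deg w'\le\deg u$, so any $w\in G(IJ)$ dividing $w'$ either has strictly smaller degree --- and hence automatically comes after both $u$ and $v$ in the order, settling $u\prec w$ and $w\ne v$ at once --- or equals $w'$, in which case multiplicativity of the monomial order ($u_1<v_1\Rightarrow u_1u_2<v_1u_2$) gives $u\prec w$, and coprimality gives $w\ne v$. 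If you want to salvage your approach, the lesson is to pick an order in which ``being a proper divisor'' forces ``coming later,'' so that the factorization of the witness becomes irrelevant; the lex order on factorization pairs does not have this property.
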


  In the statement of the second result we write $I^{(k)}$ for the $k$\textsuperscript{th} symbolic power
  of the ideal $I$.

  \begin{Theorem} \label{thm:sym}
    Let $I$ be a squarefree monomial ideal in $S$ different from $S$. Then for $k \gg 0$ the $k$\textsuperscript{th} symbolic power 
    $I^{(k)}$ is Golod for $k\gg 0$.
  \end{Theorem}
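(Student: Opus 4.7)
The approach will be to reduce \ref{thm:sym} to \ref{thm:prod} by exhibiting, for $k$ sufficiently large, a factorization $I^{(k)} = A \cdot B$ into two proper monomial ideals; \ref{thm:prod} then yields that $S/I^{(k)}$ is Golod.

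The key input is the Noetherianity of the symbolic Rees algebra $\mathcal{R}_s(I) := \bigoplus_{k \geq 0} I^{(k)} t^k \subseteq S[t]$, which holds for squarefree monomial ideals by classical results of Lyubeznik and Herzog--Hibi--Trung. Fix a finite set of monomial algebra generators $m_1 t^{a_1}, \ldots, m_r t^{a_r}$ of $\mathcal{R}_s(I)$ over $S$, and set $d := \max_i a_i$. By the standard Veronese trick---any sufficiently divisible Veronese of a finitely generated graded $S$-algebra is standard graded---there is a positive integer $c$ (for instance $c = \lcm(a_1, \ldots, a_r)$ works) such that $I^{(kc)} = \bigl( I^{(c)} \bigr)^{k}$ for all $k \geq 1$. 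Consequently, for every $n \geq 2$ the equality $I^{(nc)} = I^{(c)} \cdot I^{((n-1)c)}$ already presents $I^{(nc)}$ as a product of two proper monomial ideals, and \ref{thm:prod} handles these values of $k$.

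To deal with values of $k$ that are not multiples of $c$, the plan is to establish the sharper identity $I^{(k)} = I^{(c)} \cdot I^{(k-c)}$ for all $k \gg 0$. The inclusion $\supseteq$ is automatic. For the reverse inclusion, take a minimal monomial generator $m \in I^{(k)}$, write it via the Rees generators as $m = \prod_{i=1}^r m_i^{b_i}$ with $\sum_i b_i a_i = k$, and extract a subproduct of total Rees-degree exactly $c$; the complementary factor then lies in $I^{(k-c)}$. Once $k$ is large compared to $d$, the total number of Rees factors in any such representation grows, and a greedy or pigeonhole argument on the multiplicities $b_i$ produces a subproduct of the required degree.

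I expect the principal obstacle to be precisely this last step: guaranteeing that a subproduct of Rees-degree \emph{exactly} $c$ can always be extracted, rather than merely a subproduct of Rees-degree at least $c$. The difficulty is that the degrees $a_i$ may admit arithmetic obstructions to realizing $c$ as a subset-sum of the $b_i a_i$'s; these obstructions disappear only when the multiplicities $b_i$ become large, and securing a uniform threshold on $k$ past which this is automatic is where the hypothesis $k \gg 0$ is used in a crucial way. Once $I^{(k)} = I^{(c)} \cdot I^{(k-c)}$ is established for all $k$ sufficiently large, applying \ref{thm:prod} to the two proper monomial ideals $I^{(c)}$ and $I^{(k-c)}$ completes the proof.
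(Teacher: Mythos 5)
Your proposal is essentially the paper's proof: the paper likewise invokes Noetherianity of the symbolic Rees algebra (via Lyubeznik), passes to a Veronese degree $c$ for which $\bigoplus_i I^{(ci)}$ is standard graded (following Cowsik), deduces $I^{(k)} = I^{(c)}I^{(k-c)}$ for all $k \gg 0$, and applies \ref{thm:prod}. The only difference is bookkeeping in the last step: the paper gets the factorization from finite generation of $\bigoplus_i I^{(i)}$ as a module over the Veronese subalgebra (with module generators in degrees $\leq \alpha$), whereas you extract a degree-$c$ subproduct directly from a Rees-monomial representation; the ``arithmetic obstruction'' you flag is not actually an issue for your choice $c = \lcm(a_1,\ldots,a_r)$, since then each $a_i$ divides $c$ and a pigeonhole on $k \geq rc$ forces some $b_i \geq c/a_i$, so $m_i^{c/a_i} \in I^{(c)}$ can be split off with complementary factor in $I^{(k-c)}$.
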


  Besides the strong algebraic implications of Golodness the case of squarefree monomial ideals relates to
  interesting topology. Let $\Delta$ be a simplicial complex on ground set $[n]$ and let $\KK[\Delta]$ be its 
  Stanley--Reisner ring (see \ref{sec:mom} for basic facts about Stanley-Reisner rings).
  By work of Buchstaber and Panov \cite[Thm. 7.7]{BuchstaberPanov}, extending an additive isomorphism from
  \cite{GasharovPeevaWelker2}, it is known that there is an algebra
  isomorphism of the Koszul homology $\homology_*(\xx,k[\Delta])$ and the singular cohomology ring $\homology^*(M_\Delta;k)$
  where $M_\Delta = \{ (v_1,\ldots, v_n) \in (D^2)^n ~|~\{ i~|~v_i \not\in S^1 \} \in \Delta \}$.
  Here $D^2 = \{ v \in \RR^2 ~|~||v|| \leq 1 \}$ is the unit disk in $\RR^2$ and $S^1$ its bounding unit circle.
  Note that the isomorphism ist not graded for the usual grading of $\homology_*(\xx,k[\Delta])$ and $\homology^*(M_\Delta;k)$.
  The complex $M_\Delta$ is the {\it moment-angle complex} or {\it polyhedral product} of the pair $(D^2,S^1)$ for $\Delta$ (we refer the reader
  to \cite{BuchstaberPanov} and \cite{DenhamSuciu} for background information). 
  Last we write $\Delta^\dual = \{ A \subseteq [n] ~|~[n] \setminus A \not\in \Delta \}$ for the 
  {\it Alexander dual} of the simplicial complex $\Delta$. Now we are in position to formulate the following
  consequence of \ref{thm:prod}.

  \begin{Corollary}
    \label{cor:join}
    Let $\Delta$ be a simplicial complex such that 
    $\Delta = (\Delta_1^\dual * \Delta_2^\dual)^\dual$ for two simplicial complexes
    $\Delta_1$, $\Delta_2$ on disjoint ground sets. 
    Then the multiplication on $\homology^*(M_\Delta;k)$ is trivial.
  \end{Corollary}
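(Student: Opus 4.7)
The plan is to reduce \ref{cor:join} to \ref{thm:prod} by showing that the Stanley--Reisner ideal of $\Delta$ factors as a product of two proper monomial ideals, and then to convert Golodness of the resulting Stanley--Reisner ring into the claimed triviality of multiplication via the Buchstaber--Panov algebra isomorphism recalled just above.

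Concretely, let $V_i$ denote the ground set of $\Delta_i$, set $V = V_1 \sqcup V_2$, and work in $S = \KK[x_v : v \in V]$. The defining identity $\Delta^\dual = \{A \subseteq V : V \setminus A \notin \Delta\}$ says that the facets of $\Delta_i^\dual$ are precisely the complements $V_i \setminus M_i$ as $M_i$ ranges over the minimal non-faces of $\Delta_i$; hence the facets of the join $\Delta_1^\dual * \Delta_2^\dual$ have the form $(V_1 \setminus M_1) \cup (V_2 \setminus M_2)$. Applying Alexander duality once more, the minimal non-faces of $\Delta$ are exactly the disjoint unions $M_1 \sqcup M_2$, and since $V_1, V_2$ are disjoint we have $x_{M_1 \sqcup M_2} = x_{M_1} \cdot x_{M_2}$. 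This yields the factorization
\[ I_\Delta = I_{\Delta_1} \cdot I_{\Delta_2} \]
inside $S$, where each factor is regarded as an ideal of $S$ generated by the Stanley--Reisner generators of the respective subcomplex.

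Assuming both $\Delta_i$ have at least one minimal non-face (else the corollary is vacuous), each $I_{\Delta_i}$ is a proper monomial ideal in $S$, and \ref{thm:prod} implies that $\KK[\Delta] = S/I_\Delta$ is Golod. Golodness forces every Massey operation on the Koszul homology $\homology_*(\xx,\KK[\Delta])$ to vanish, and in particular the product of any two positive-degree classes is zero. Transporting this along the Buchstaber--Panov algebra isomorphism $\homology_*(\xx,\KK[\Delta]) \cong \homology^*(M_\Delta;\KK)$ proves the corollary. The only nontrivial step is the combinatorial identification $I_\Delta = I_{\Delta_1} \cdot I_{\Delta_2}$; once that bookkeeping is in hand, \ref{thm:prod} and the Buchstaber--Panov isomorphism do all the remaining work.
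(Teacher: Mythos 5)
Your argument is correct and follows exactly the paper's route: you re-derive the identity $I_{(\Delta_1^\dual * \Delta_2^\dual)^\dual} = I_{\Delta_1} I_{\Delta_2}$ (this is the paper's \ref{lem:join}) and then apply \ref{thm:prod} together with the Buchstaber--Panov isomorphism (the content of \ref{cor:mom1}(i), with polarization unnecessary here since the product is already squarefree). Nothing essentially different from the paper's proof.
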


  The main tool for the proof of \ref{thm:prod} and \ref{thm:sym} is combinatorial. Let $I$
  be a monomial ideal and write $G(I)$ for the set of minimal monomial generators of $I$.
  In \cite[Def. 3.8]{Joellenbeck} the author introduces a combinatorial condition on $G(I)$ that 
  in \cite{BerglundJoellenbeck} was shown to imply the Golod property for $S/I$. 
  The ideal $I$ is said to satisfy the {\it strong gcd--condition} 
  if there exists a linear order $\prec$ on $G(I)$ such that for any two monomials $u \prec v\in G(I)$ 
  with $\gcd(u , v) = 1$ there exists a monomial $w \in G(I)$ with $w \neq u,v$ such that $u\prec w$ and $w$
  divides $\lcm(u,v) = uv$. The following result from \cite{BerglundJoellenbeck} removes an
  unnecessary assumption from the statement of Theorem 7.5 in \cite{Joellenbeck}. 

  \begin{Theorem}[Thm. 5.5 \cite{BerglundJoellenbeck}] 
    \label{thm:crit}
    Let $I$ be a monomial ideal. 
    If $I$ satisfies the strong gcd-condition then $S/I$ is Golod.
  \end{Theorem}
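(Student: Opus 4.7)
The plan is to apply algebraic discrete Morse theory to the Taylor complex of $I$, using the linear order $\prec$ supplied by the strong gcd-condition to construct an acyclic matching whose critical cells yield a resolution that visibly forces the Massey operations on the Koszul complex of $S/I$ to vanish. Recall that the Taylor complex $T_\bullet$ is a $\ZZ^n$-graded free resolution of $S/I$ with basis the non-empty subsets $A\subseteq G(I)$ in homological degree $|A|$ and multidegree $\lcm(A)$, and with differential $\partial[A]=\sum_{u\in A}\pm\tfrac{\lcm(A)}{\lcm(A\setminus\{u\})}[A\setminus\{u\}]$.

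The first step is to define the matching $\mc{M}$. For each subset $A\subseteq G(I)$ with $|A|\geq 2$, locate the $\prec$-smallest $u\in A$ for which there exists some $v\in A$ with $\gcd(u,v)=1$, choose the $\prec$-smallest such $v$, and then invoke the strong gcd-condition to obtain the $\prec$-smallest generator $w$ with $u\prec w$, $w\neq u,v$, and $w\mid uv$. Match $A$ with $A\cup\{w\}$ when $w\notin A$, and with $A\setminus\{w\}$ when $w\in A$. A short argument, following the smallest coprime pair through any putative cycle in $\mc{M}$, shows that $\mc{M}$ is well-defined and acyclic in the Morse sense, so the critical subsets, namely those whose elements are pairwise non-coprime, index a new free $S$-resolution $F_\bullet$ of $S/I$.

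From $F_\bullet$ I would then deduce Golodness by arguing that the multiplication
\[ \Tor^S_+(S/I,\KK)\otimes\Tor^S_+(S/I,\KK)\longrightarrow\Tor^S_+(S/I,\KK), \]
and more generally every higher Massey operation on these groups, vanishes. The key observation is that any potentially non-zero product or Massey defect in a fixed multidegree is represented, via the product on the Taylor complex, by a pairing of critical subsets whose $\lcm$s have product equal to the given multidegree; the strong gcd-condition rules this out, since a coprime pair arising in the union would have been matched away under $\mc{M}$.

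The main obstacle is the extension from vanishing of the ordinary product to vanishing of all higher Massey operations. Ordinary products are controlled by a direct multidegree bookkeeping, but higher Massey brackets involve iterated null-homotopies, and one must either carry out a careful multigraded induction on $F_\bullet$ to show that every Massey defect lies in the image of $\partial$, or invoke an intrinsic criterion (via the bar construction on the Koszul complex) characterising Golodness for monomial ideals in terms of the vanishing of products alone. Either route delivers the conclusion that $S/I$ is Golod.
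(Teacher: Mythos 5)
The paper does not actually prove this statement; it is imported verbatim as Theorem~5.5 of \cite{BerglundJoellenbeck}, so there is no internal argument to compare against. Your sketch reconstructs the discrete-Morse-theory strategy of J\"ollenbeck \cite{Joellenbeck}, which is indeed the right circle of ideas, but as written it has two genuine gaps, and they are precisely the points where the published history of this theorem is delicate.

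First, the matching $\mc{M}$ is not shown to be well defined as an involution, and this is not a ``short argument.'' You select the triple $(u,v,w)$ from $A$ and then pass to $A\cup\{w\}$; for the matching to pair these two sets with each other, the set $A\cup\{w\}$ must select the \emph{same} triple. But the strong gcd-condition only guarantees $w\mid uv$, hence that $w$ is non-coprime to $u$ (and to $v$); it says nothing about $\gcd(u',w)$ for the elements $u'\in A$ with $u'\prec u$ that previously had no coprime partner in $A$. If some such $u'$ is coprime to $w$, then $A\cup\{w\}$ chooses a different minimal coprime pair and is matched elsewhere, destroying the involution (and with it acyclicity and the identification of the critical cells). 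Handling exactly this failure is why J\"ollenbeck's original Theorem~7.5 carries an extra hypothesis on the existence of the matching, the hypothesis that \cite{BerglundJoellenbeck} had to remove by a different argument. Second, even granting a Morse resolution supported on pairwise non-coprime subsets, your passage to the vanishing of \emph{all} Massey operations is not carried out. Of your two proposed routes, the ``intrinsic criterion'' --- that for monomial ideals triviality of the ordinary product on $\Tor^S_+(S/I,\KK)$ (equivalently on the Koszul homology) already implies Golodness --- is not a safe black box: it is itself the main theorem of \cite{BerglundJoellenbeck}, its proof is known to contain a gap, and the statement is in fact false in general (Katth\"an exhibited a monomial ring with trivial Koszul-homology product that is not Golod). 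The remaining route, a multigraded induction showing that every Massey defect is a boundary, is the actual content of the theorem and is left entirely unproven. As it stands the proposal is a plausible outline of the intended proof, not a proof.
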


  We refer the reader to \cite{Berglund} for the relation of the gcd--condition to standard
  combinatorial properties of simplicial complexes.

  The paper is organized as follows.
  In \ref{sec:prod} we verify the strong gcd--condition for products of monomial ideals
  and in \ref{sec:sym} for high symbolic powers of squarefree monomial ideals. This yields
  \ref{thm:prod} and \ref{thm:sym}.
  In \ref{sec:mom} we study the implications of \ref{thm:prod} and \ref{thm:sym} 
  on moment--angle complexes. In particular, we derive
  \ref{cor:join}.


\section{Product of monomial ideals} \label{sec:prod}

  Since by \ref{thm:crit} for any monomial ideal $I$ satisfying the strong gcd-condition 
  the ring $S/I$ is Golod, the following result immediately implies \ref{thm:prod}.

  \begin{Proposition} \label{prop:prod}
    For any two monomial ideals $I,J$ in $S$ that are different from $S$ 
    the ideal $IJ$ satisfies strong-gcd condition.
  \end{Proposition}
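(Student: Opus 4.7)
The plan is to endow $G(IJ)$ with a lexicographic order induced by canonical factorizations through $G(I)$ and $G(J)$, and then verify the strong gcd-condition by hand. First, I would fix arbitrary linear orders $\prec_I$ on $G(I)$ and $\prec_J$ on $G(J)$. For each minimal generator $m\in G(IJ)$, minimality forces the existence of at least one factorization $m=ab$ with $a\in G(I)$ and $b\in G(J)$, though this need not be unique; set $\alpha(m)$ to be the $\prec_I$-smallest such $a$ and $\beta(m):=m/\alpha(m)\in G(J)$. Define $\prec$ on $G(IJ)$ to be the lex order on the pair $(\alpha(m),\beta(m))$, with $\prec_I$ dominating; since the assignment $m\mapsto(\alpha(m),\beta(m))$ is injective, this is a well-defined total order.

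To verify the strong gcd-condition, take $u\prec v$ in $G(IJ)$ with $\gcd(u,v)=1$. The coprimality passes to the decompositions: all four of $\gcd(\alpha(u),\alpha(v))$, $\gcd(\alpha(u),\beta(v))$, $\gcd(\beta(u),\alpha(v))$, $\gcd(\beta(u),\beta(v))$ equal $1$, and because $I,J\neq S$ none of $\alpha(u),\alpha(v),\beta(u),\beta(v)$ is a unit, so $\alpha(u)\neq \alpha(v)$ and $\beta(u)\neq \beta(v)$. The condition $u\prec v$ together with the lex rule then forces $\alpha(u)\prec_I \alpha(v)$. The natural candidate witness is any minimal generator $w\in G(IJ)$ dividing the monomial $m^*:=\alpha(v)\beta(u)\in IJ$; such $w$ automatically divides $uv$, and the relations $u\mid m^*\Rightarrow \alpha(u)\mid\alpha(v)$ and $v\mid m^*\Rightarrow \beta(v)\mid \beta(u)$, both ruled out by coprimality, guarantee $w\neq u,v$.

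The main obstacle is then showing that $w$ may be chosen so that $u\prec w$, i.e. $\alpha(w)\succ_I \alpha(u)$, or $\alpha(w)=\alpha(u)$ and $\beta(w)\succ_J \beta(u)$. When $m^*\in G(IJ)$ one takes $w=m^*$; the factorization $(\alpha(v),\beta(u))$ combined with the coprimality $\gcd(\alpha(v),\beta(u))=1$ (which forbids any cross-factorization $\alpha(w)=d_1 d_2$ with $d_2\mid \beta(u)$ and $d_2\neq 1$ from having $d_1d_2\in G(I)$) forces the canonical $\alpha(w)=\alpha(v)\succ \alpha(u)$. The hard case is $m^*\notin G(IJ)$: here a strict minimal divisor $w\in G(IJ)$ of $m^*$ may have canonical $\alpha(w)$ dropping below $\alpha(u)$ via alternative factorizations through ``cross generators'' of $I$ and $J$ that mix variables of $\alpha(v)$ and $\beta(u)$. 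In that case one enlarges the pool of candidates beyond divisors of $m^*$, considering also minimal generators of the form $\alpha(u)\cdot g$ for $g\in G(J)$ with $g\mid uv$ and $g\succ_J \beta(u)$, and shows via the coprimality of $u$ and $v$ (which partitions $\mathrm{supp}(uv)$ into the disjoint supports of $u$ and $v$) that at least one such $w\succ u$ must exist. This case analysis, routing the argument through all minimal generators of $IJ$ that divide $uv$, is the technical core.
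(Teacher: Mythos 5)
Your proposal has a genuine gap: the step you yourself identify as ``the technical core'' is never carried out, and the argument as written does not close. Concretely, after producing the candidate $m^*=\alpha(v)\beta(u)$ you must show that some $w\in G(IJ)$ dividing $uv$ satisfies $u\prec w$, and your order gives you no handle on this. Even your ``easy case'' is flawed: if $m^*\in G(IJ)$, the claim that its canonical first factor is $\alpha(v)$ does not follow from $\gcd(\alpha(v),\beta(u))=1$. Coprimality only says that any $a\in G(I)$ dividing $m^*$ factors as $d_1d_2$ with $d_1\mid\alpha(v)$, $d_2\mid\beta(u)$; nothing forbids such a ``mixed'' generator from lying in $G(I)$ and from being $\prec_I$-smaller than $\alpha(u)$, in which case $m^*\prec u$ and the witness fails. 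For the hard case ($m^*\notin G(IJ)$) you only gesture at enlarging the candidate pool, with no argument that a witness above $u$ exists; since $\prec_I$ and $\prec_J$ are \emph{arbitrary} linear orders, there is no structural reason why one should.

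The paper's proof sidesteps exactly this difficulty by a different choice of order: it orders $G(IJ)$ by \emph{decreasing} total degree, breaking ties with a fixed monomial order. The payoff is that any proper divisor of a monomial automatically comes strictly \emph{later} in $\prec$, so once one exhibits a monomial $w'\in IJ$ dividing $uv$ with $\deg(w')\le\deg(u)$ (built, as in your approach, by recombining the factorizations $u=u_1u_2$, $v=v_1v_2$), any minimal generator $w$ dividing $w'$ works: either $w=w'$ and the tie-breaking monomial order gives $u\prec w$, or $\deg(w)<\deg(u)$ and $u\prec w$ for free. If you want to salvage your approach, the cleanest fix is to abandon the arbitrary orders $\prec_I,\prec_J$ and the lex construction, and instead build the degree-reversing order directly on $G(IJ)$; the recombination idea in your second paragraph then goes through essentially as in the paper.
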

  \begin{proof}
    Fix a monomial order $<$ on the set of monomials of $S$. 
    We define a linear order $\prec$ on $G(IJ)$ as follows: For two monomials $u, v \in G(IJ)$, we set $u\prec v$ 
    if and only if $\deg (u) > \deg(v)$ or $\deg(u) = \deg(v)$ and $u < v$. 
    Now the following claim states that $G(IJ)$ ordered by $\prec$ satisfies the strong gcd--condition. 

    \noindent {\sf Claim:} For any two monomials $u, v\in G(IJ)$ with $u \prec v$ and $\gcd(u, v) = 1$ there exists a
    monomial $w\in G(IJ)$ different from $u$ and $v$ such that $u\prec w$ and $w$ divides $\lcm(u, v)=uv$. 
  
    Since $u , v\in G(IJ)$, there exist monomials $u_1,v_1\in G(I)$ and $u_2, v_2 \in G(J)$ such 
    that $u=u_1u_2$ and $v=v_1v_2$. We consider two cases:

    \noindent {\sf Case 1.} $\deg (u) = \deg(v)$. 

    By definition, $u <v$. Therefore, $u_1 < v_1$ or $u_2 < v_2$, since otherwise 
    $u=u_1u_2 \geq v_1v_2=v$, which is a contradiction. Without loss of generality we may assume that $u_1 < v_1$.

    If $\deg (u_1) \geq \deg(v_1)$, then define $w':= v_1u_2$. Now $w'\in IJ$, $\deg (w') \leq \deg (u)$ and $u < w'$. 
    If $w'\in G(IJ)$, then we set $w=w'$ and clearly  $u\prec w$ and $w$ divides $\lcm(u, v)=uv$. By $\gcd(u, v) = 1$,
     it follows that $u_2\neq v_2$ and therefore $w\neq v$. If $w'\not\in G(IJ)$, then there exists a 
    monomial $w\in G(IJ)$, such that $w | w'$. Now $w$ divides $\lcm(u, v)=uv$ and since $\deg (w) < \deg (w') \leq \deg (u) = \deg (v)$, 
    we conclude that $u \prec v \prec w$ and $w\neq v$.

    If $\deg (u_1) < \deg (v_1)$, then $\deg (u_2) > \deg (v_2)$, since $\deg (u) = \deg (v)$. We define $w':=u_1v_2$. 
    Now $w'\in IJ$ and therefore there exists a monomial, say $w\in G(IJ)$, such that $w | w'$. Then $w$ divides $\lcm(u, v)=uv$ 
    and since $\deg (w)\leq \deg (w') < \deg (u) = \deg(v)$, we conclude that $u\prec v \prec w$ and $w\neq v$.

    \noindent {\sf Case 2.} $\deg (u) \neq \deg (v)$. 
  
    By definition of $\prec$ we must have $\deg (u) > \deg (v)$. Hence either of $\deg (v_1) < \deg (u_1)$ or $\deg (v_2) < \deg (u_2)$ holds. 
    Without loss of generality we assume that $\deg (v_1) < \deg (u_1)$. We define $w'=v_1u_2$. Then $w'\in IJ$ and therefore 
    there exists a monomial, say $w\in G(IJ)$, such that $w$ divides $w'$. Thus $w$ divides $\lcm(u, v)=uv$ and 
    $u\prec w$, since $\deg (w)\leq \deg (w') < \deg(u)$. Also $w \neq v$, because otherwise $v$ divides $v_1u_2$ and since 
    $\gcd(u_1, v) = 1$, it implies that $v$ divides $v_1$, which is impossible.
  \end{proof}

  In \cite[Thm. 4.1]{HerzogWelkerYassemi} it is shown that for a homogeneous ideal $I \neq S$ in the polynomial ring $S$  
  the ring $S/I^k$ is Golod for $k\gg 0$. For monomial ideals \ref{thm:prod} allows a more precise
  statement of their result which follows immediately from \ref{prop:prod} and \ref{thm:crit}.

  \begin{Corollary} \label{cor:pow}
    Let $I$ be a monomial ideal in the polynomial ring $S$ different from $S$. 
    Then for $k \geq 2$ the ideal $I^k$ satisfies the strong gcd--condition and hence
    the ring $R=S/I^k$ is Golod.
  \end{Corollary}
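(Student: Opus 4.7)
The plan is to deduce \ref{cor:pow} directly from \ref{prop:prod} and \ref{thm:crit} by a trivial factorization. For $k \geq 2$, I would write $I^k = I \cdot I^{k-1}$ and view this as a product of two monomial ideals. Since $I \neq S$, the ideal $I^{k-1}$ is also a proper monomial ideal (it is contained in $I$ for $k-1 \geq 1$, and products of monomial ideals are monomial), so both factors satisfy the hypotheses of \ref{prop:prod}.

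Applying \ref{prop:prod} to the pair $(I, I^{k-1})$ then yields that $I^k = I \cdot I^{k-1}$ satisfies the strong gcd-condition. Invoking \ref{thm:crit} with this ideal immediately gives that $S/I^k$ is Golod.

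There is essentially no obstacle: the entire argument is a one-line reduction, and the only minor bookkeeping is the observation that $I^{k-1}$ remains a proper monomial ideal whenever $I$ is. Consequently I would expect the written proof to be a single short paragraph rather than a multi-step argument.
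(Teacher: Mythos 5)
Your argument is exactly the one the paper intends: the corollary is stated as following immediately from \ref{prop:prod} and \ref{thm:crit}, via the factorization $I^k = I\cdot I^{k-1}$ into two proper monomial ideals. The proposal is correct and matches the paper's (unwritten, one-line) proof.
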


  Since by \cite[Thm. 3.4.5]{BrunsHerzog} the Koszul homology $\homology_*(\xx,S/I)$ for a 
  Gorenstein quotient $S/I$ of $S$ is a Poincar\'e duality algebra, $S/I$ cannot be
  Golod unless $I$ is a principal ideal. Thus \ref{thm:prod} implies that a product
  of monomial ideal can be Gorenstein if and only if it is principal.
  This observation is not new and can be seen as a consequence of a 
  very general result by Huneke \cite{Huneke} who showed that in any unramified 
  regular local ring no Gorenstein ideal of height $\geq 2$ can be a product.
  
\section{Symbolic powers of squarefree monomial ideals}
  \label{sec:sym}

  For our results on symbolic powers we have to restrict ourselves to squarefree
  monomial ideals $I$. This is due to the fact that our proofs use that 
  only in this case in the primary decomposition $I=\pp_1\cap\cdots\cap\pp_r$ of $I$ every $\pp_i$ is an
  ideal of $S$ generated by a subset of the variables of $S$ \cite[Lem. 1.5.4]{HerzogHibi}. 
  Moreover, in this situation 
  for a positive integer $k$ the $k$\textsuperscript{th} symbolic power $I^{(k)}$ of $I$
  coincides with $\pp_1^k\cap\ldots\cap \pp_r^k$ \cite[Prop.1.4.4]{HerzogHibi}. 

  Now we are ready to prove that the high symbolic powers of squarefree monomial ideals  
  of $S$ fulfill the strong-gcd condition.

  \begin{Proposition} \label{prop:sym}
    Let $I$ be a squarefree monomial ideal in $S$ different from $S$. Then for $k \gg 0$ the $k$\textsuperscript{th} symbolic power 
    $I^{(k)}$ satisfies the strong gcd--condition.
  \end{Proposition}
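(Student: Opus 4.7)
The plan is to follow the same strategy as the proof of \ref{prop:prod}. Fix a monomial order $<$ on $S$ and define $\prec$ on $G(I^{(k)})$ by $u \prec v$ iff $\deg(u) > \deg(v)$, or $\deg(u) = \deg(v)$ with $u < v$. Using the primary decomposition $I = \pp_1 \cap \cdots \cap \pp_r$, write $S_i$ for the set of variables generating $\pp_i$ and set $d_i(u) := \sum_{x_j \in S_i} \deg_{x_j}(u)$. Then $u \in I^{(k)}$ iff $d_i(u) \geq k$ for every $i$, and $u \in G(I^{(k)})$ iff, additionally, for every $x_j \in \supp(u)$ there exists a \emph{tight} prime $\pp_s$ with $x_j \in S_s$ and $d_s(u) = k$.

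Suppose $u \prec v$ in $G(I^{(k)})$ are coprime. Then $\supp(u)$ and $\supp(v)$ are disjoint, while $\supp(u) \cap S_i$ and $\supp(v) \cap S_i$ are both nonempty for every $\pp_i$. In analogy with \ref{prop:prod}, I would construct the witness $w$ via a swap of variables between $u$ and $v$: form $w' = (u/x_j) \cdot x_l$ for a chosen pair $x_j \in \supp(u)$, $x_l \in \supp(v)$; then $w' \mid uv$ and $\deg(w') = \deg(u)$. For $w' \in I^{(k)}$ one needs that every prime $\pp_s$ containing $x_j$ and tight for $u$ also contains $x_l$. When such a pair exists, I would take $w$ to be any minimal generator of $I^{(k)}$ dividing $w'$; then $w \mid uv$ with $\deg(w) \leq \deg(u)$, and by arranging the monomial order so that a $<$-smaller variable of $u$ is replaced by a $<$-larger variable of $v$, one obtains $u \prec w$. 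When no single swap succeeds, one instead performs a simultaneous multi-variable replacement $w' = u \cdot \prod_\alpha x_{l_\alpha} / \prod_\alpha x_{j_\alpha}$, designed to patch every tight prime that would otherwise drop below $k$.

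The main obstacle is proving that such a (single or multi) swap always exists once $k$ is sufficiently large. The heart of the argument is a combinatorial case analysis on the incidence between $\supp(u)$, $\supp(v)$, and the family $\{\pp_s : d_s(u) = k\}$ of tight primes, combined with a counting estimate: for $k$ above a threshold determined by $\max_i |S_i|$ and $r$, the slack $d_s(u) - k$ in the non-tight coordinates is large enough to absorb the collateral drops $d_s(w') \leq d_s(u)$ induced by the swap, and to guarantee that the resulting $w$ is distinct from both $u$ and $v$. Once this construction is carried out, $I^{(k)}$ satisfies the strong gcd-condition, and \ref{thm:sym} follows from \ref{thm:crit}.
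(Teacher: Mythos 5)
Your proposal is a plan, not a proof: the step you yourself identify as ``the main obstacle'' --- showing that for $k \gg 0$ a single or multi-variable swap always produces a witness $w' \in I^{(k)}$ --- is exactly the content of the proposition, and it is left entirely unproven. Nothing in the sketch explains what role the hypothesis $k \gg 0$ actually plays: the ``counting estimate'' on the slacks $d_s(u) - k$ is asserted but never formulated, and it is not clear that it can be, since for a minimal generator $u$ \emph{every} variable of $u$ lies in some tight prime, and a swap $x_j \mapsto x_l$ is legal only if every tight prime through $x_j$ also contains $x_l$ --- a purely combinatorial condition on the primes $\pp_1,\dots,\pp_r$ that does not obviously become easier to satisfy as $k$ grows. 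There are also smaller unresolved points: the order $\prec$ must be fixed once and for all, so you cannot ``arrange'' the monomial order per pair $(u,v)$ to force $u < w'$ in the case $\deg(w') = \deg(u)$ and $w' \in G(I^{(k)})$; and the requirement $w \neq v$ is not actually checked.

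The paper proves the statement by an entirely different and much shorter route, which supplies precisely the structural input your direct attack is missing. By Lyubeznik the symbolic Rees algebra $A = \bigoplus_{i \geq 0} I^{(i)}$ is Noetherian, hence a finitely generated $\KK$-algebra, and (following Cowsik) there is a $c$ such that $A_0 = \bigoplus_{i \geq 0} I^{(ci)}$ is a standard graded algebra; bounding by $\alpha$ the degrees of a generating set of $A$ as an $A_0$-module, one obtains $I^{(k)} = I^{(c)} I^{(k-c)}$ for all $k > \max\{\alpha, c\}$. Thus for large $k$ the symbolic power is literally a product of two monomial ideals, and \ref{prop:prod} applies verbatim. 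This factorization is where ``$k \gg 0$'' enters; to salvage your approach you would in effect have to reprove it in combinatorial terms, which is considerably harder than invoking it.
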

  \begin{proof}
    By \cite[Prop. 1]{Lyubeznik} the ring $A=\bigoplus_{i=0}^{\infty}I^{(i)}$ is Noetherian and therefore is a finitely 
    generated $\KK$-algebra. Assume that the set $\{y_1, \ldots, y_m\}$ is a set of generators for the $\KK$-algebra $A$. 
    Following \cite{Cowsik}, since $A$ is finitely generated, there exists a natural number $c$ such that 
    $A_0=\bigoplus_{i=0}^{\infty}I^{(ci)}$ is a standard $\KK$-algebra and therefore a Noetherian ring. Note that the set
    $$\{y_1^{\ell_1}\ldots y_m^{\ell_m}~|~  0\leq \ell_1, \ldots, \ell_m \leq c-1\}$$
    is a system of generators for $A$ as an $A_0$-module. Assume that the degree of a generator of the $A_0$-module $A$ is at most $\alpha$.
    Since $A_0$ is standard $\KK$-algebra for every integer $k> \max \{\alpha , c \}$ we 
    have $I^{(k)}= I^{(c)}I^{(k-c)}$. Thus for every $k> \max \{\alpha , c \}$, 
    the ideal $I^{(k)}$ is the product of two monomial ideals. Therefore, by \ref{prop:prod} it satisfies the strong-gcd condition.
  \end{proof}
 
  The following corollary is an immediate consequence of \ref{thm:crit} and \ref{thm:sym}.

  \begin{Corollary}
    Let $I$ be a squarefree monomial ideal in the polynomial ring $S$, which is not a principal ideal. Then
     for $k \gg 0$ the ring $S/I^{(k)}$ is not Gorenstein.
  \end{Corollary}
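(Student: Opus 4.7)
The plan is to combine \ref{thm:sym} with the incompatibility between Golodness and Gorensteinness that is already flagged in the paragraph following \ref{cor:pow}: because the Koszul homology of a Gorenstein quotient $S/J$ of $S$ is a Poincar\'e duality algebra by \cite[Thm.~3.4.5]{BrunsHerzog}, such a quotient can only be Golod when $J$ is a principal ideal. I will apply this observation with $J = I^{(k)}$ for $k \gg 0$.

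First, \ref{thm:sym} tells us that for $k \gg 0$ the ring $S/I^{(k)}$ is Golod. So if in addition $S/I^{(k)}$ were Gorenstein, the observation above would force the symbolic power $I^{(k)}$ to be a principal monomial ideal, and it only remains to rule this out under the hypothesis that $I$ itself is not principal.

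This last step is elementary. Since $I$ is squarefree, its primary decomposition has the form $I = \pp_1 \cap \cdots \cap \pp_r$ with each $\pp_i$ generated by a subset of the variables of $S$, and then $I^{(k)} = \pp_1^k \cap \cdots \cap \pp_r^k$ with associated primes exactly $\pp_1, \ldots, \pp_r$. On the other hand, a principal monomial ideal $(m)$ with $m = x_{i_1}^{a_1}\cdots x_{i_s}^{a_s}$ has primary decomposition $(x_{i_1}^{a_1}) \cap \cdots \cap (x_{i_s}^{a_s})$, so all its associated primes are principal. Hence if $I^{(k)}$ were principal, each $\pp_i$ would be generated by a single variable, say $\pp_i = (x_{j_i})$, and then $I = \pp_1 \cap \cdots \cap \pp_r = (x_{j_1} \cdots x_{j_r})$ would itself be principal, contrary to hypothesis. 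There is no real obstacle in this argument: it only assembles \ref{thm:sym}, the Poincar\'e-duality obstruction already invoked earlier in the paper, and the observation about associated primes of principal monomial ideals.
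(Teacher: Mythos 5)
Your proof is correct and follows exactly the route the paper intends (the paper states the corollary as an "immediate consequence" of its Golodness results together with the Poincar\'e-duality obstruction noted after \ref{cor:pow}, without writing out the details). You correctly supply the one step the paper leaves implicit, namely that $I^{(k)}$ principal would force every minimal prime $\pp_i$ to be generated by a single variable and hence $I=\pp_1\cap\cdots\cap\pp_r$ to be principal.
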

 
  We do not know which $k$ suffices. Indeed, we do not have an example of a monomial ideal $I \neq S$ squarefree or not
  for which $I^{(2)}$ is not Golod.
 
\section{Moment--angle complexes}
  \label{sec:mom}
  
  First, we recall some basics from the theory of Stanley-Reisner ideals.
  Let $\Delta$ be a simplicial complex on ground set $[n]$.  A subset $N \subseteq [n]$ such that 
  $N \not\in \Delta$ and $N \setminus \{i\} \in \Delta$ for all $i \in N$ is called a {\it minimal   
  non-face} of $\Delta$. The {\it Stanley-Reisner ideal} $I_\Delta$ of $\Delta$ is
  the ideal in $S$ generated by the monomial $x_N$ for the minimal non-faces $N$ of $\Delta$ and
  the quotient $k[\Delta] = S/I_\Delta$ is Stanley-Reisner ring of $\Delta$.
  Indeed, the map  sending $\Delta$ to $I_\Delta$ is a bijection between the simplicial complexes on
  ground set $[n]$ and the squarefree monomial ideals in $S$. 
  To any monomial ideal $I$ its {\it polarization} $I^{\pol}$ \cite[p. 19]{HerzogHibi} is a 
  squarefree monomial ideal and it is known \cite[Thm. 3.5]{GasharovPeevaWelker} that $I$ is
  Golod if and only if $I^\pol$ is Golod. In addition, it follows from the vanishing of the
  Massey operations that the multiplication on Koszul homology $\homology_*(\xx,S/I)$ is trivial for
  all Golod $S/I$ -- by trivial we here mean that products of two elements of positive degree are $0$. 
  Thus the algebra isomorphism 
  of $\homology_*(\xx,k[\Delta])$ and $\homology^*(M_\Delta;k)$ \cite[Thm. 7.7]{BuchstaberPanov}
  together with \ref{thm:prod} and \ref{thm:sym} yields the following corollary.   
  Note that even though the isomorphism of $\homology_*(\xx,k[\Delta])$ and $\homology^*(M_\Delta;k)$
  is not graded in the usual grading it sends $\homology_0(\xx,k[\Delta])$ and $\homology^0(M_\Delta;k)$.
 
  \begin{Corollary} \label{cor:mom1}
    Let $I$ and $J$ be monomial ideals. Then for the simplicial complexes $\Gamma$ and $\Gamma^{(k)}$
    such that $(IJ)^\pol = I_{\Gamma}$ and $(I^{(k)})^\pol = I_{\Gamma^{(k)}}$ we have:
    \begin{itemize}
       \item[(i)] The multiplication on the cohomology algebra of $M_{\Gamma}$ is trivial. 
       \item[(ii)] The multiplication on the cohomology algebra of $M_{\Gamma^{(k)}}$ is trivial 
           for $k \gg 0$.
    \end{itemize}
  \end{Corollary}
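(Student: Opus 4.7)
The plan is to chain together the algebraic Golodness results of the paper with two structural facts recalled in the section: that polarization preserves Golodness, and that the Koszul homology of a Stanley--Reisner ring is isomorphic, as an algebra, to the cohomology ring of the associated moment--angle complex. Schematically, for the ideals in question I would produce Golod quotients, polarize to get squarefree (Stanley--Reisner) Golod quotients, extract that the Massey operations, and in particular the cup product on Koszul homology above degree zero, vanish, and then transport this triviality to $\homology^*(M_\Gamma;k)$ and $\homology^*(M_{\Gamma^{(k)}};k)$ via the Buchstaber--Panov isomorphism.

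For part~(i): by \ref{thm:prod} the ring $S/IJ$ is Golod. Applying \cite[Thm.~3.5]{GasharovPeevaWelker} the polarization $(IJ)^{\pol}$ again yields a Golod quotient, and by definition of $\Gamma$ this quotient is exactly the Stanley--Reisner ring $k[\Gamma]$. Golodness of $k[\Gamma]$ forces all Massey operations on its Koszul complex to vanish; in particular the ordinary product of any two classes of positive degree in $\homology_*(\xx,k[\Gamma])$ is zero. The Buchstaber--Panov algebra isomorphism \cite[Thm.~7.7]{BuchstaberPanov} sends $\homology_*(\xx,k[\Gamma])$ to $\homology^*(M_\Gamma;k)$ and, as the authors remind us, preserves the degree--zero piece. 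Consequently positive--degree classes in $\homology^*(M_\Gamma;k)$ are the images of positive--degree classes in Koszul homology, and hence their product vanishes.

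For part~(ii) the argument is identical, with \ref{thm:sym} in place of \ref{thm:prod}: for $k \gg 0$ the ring $S/I^{(k)}$ is Golod, therefore so is its polarization $S/(I^{(k)})^{\pol} = k[\Gamma^{(k)}]$, and the preceding reasoning yields triviality of the cup product on $\homology^*(M_{\Gamma^{(k)}};k)$.

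I do not expect any serious obstacle, since every ingredient has already been recorded in the section and the corollary is really just the composition of these facts. The only point that deserves a line of care, and which the author flags in the preceding paragraph, is that the Buchstaber--Panov isomorphism is not graded with respect to the usual single grading; one must therefore justify that ``triviality of products of positive--degree elements'' on the Koszul side translates correctly to the cohomology side, and this is exactly what the preservation of $\homology_0$ guarantees.
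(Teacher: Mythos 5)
Your proposal is correct and follows exactly the paper's own argument: Golodness of $S/IJ$ (resp.\ $S/I^{(k)}$ for $k \gg 0$) from \ref{thm:prod} (resp.\ \ref{thm:sym}), transfer through polarization via \cite[Thm.~3.5]{GasharovPeevaWelker}, triviality of products of positive-degree classes in Koszul homology from the vanishing Massey operations, and transport to $\homology^*(M_\Gamma;k)$ via the Buchstaber--Panov isomorphism using that it preserves the degree-zero piece. Nothing to add.
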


  In general, the combinatorics and geometry of the simplicial complexes $\Gamma$ and $\Gamma^{(k)}$ cannot be
  easily controlled even in the case $I$ and $J$ are squarefree monomial ideals. Therefore, the result is 
  more useful in a situation when the ideals $IJ$ and $I^{(k)}$ themselves are squarefree monomial ideals. Since this 
  never happens for $I^{(k)}$ and $k \geq 2$ we confine ourselves to the case of products $IJ$.
  The following lemma shows that in this case $I$ and $J$ should be squarefree monomial ideals with generators in disjoint sets of 
  variables. Even though the lemma must be a known basic fact from the theory of the monomial ideals we did not 
  find a reference and hence for the sake of completeness we provide a proof. In the proof we denote for a monomial   
  $u$ by $\Supp(u)$ its ${\it support}$, which is the set of variables dividing $u$.
  
  \begin{Lemma} \label{lem:squa}
    Let $I, J$ be monomial ideals. Then $IJ$ is a squarefree monomial ideal if and only if $I$ and $J$ are
     squarefree monomial ideals such that $\gcd(u,v) = 1$ for all $u \in G(I)$ and $v \in G(J)$.
  \end{Lemma}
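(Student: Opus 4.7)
The easy direction $(\Leftarrow)$: if $I$ and $J$ are squarefree and $\gcd(u,v)=1$ for all $u\in G(I)$, $v\in G(J)$, then every generator $uv$ of $IJ$ is a product of two squarefree monomials with disjoint supports, hence squarefree, so $IJ$ is generated by squarefree monomials.

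For the nontrivial direction $(\Rightarrow)$, I would argue by contradiction. Assume $IJ$ is squarefree but some pair $(u,v)\in G(I)\times G(J)$ has $uv$ not squarefree; a variable-by-variable check shows this is equivalent to $u$ being non-squarefree, or $v$ being non-squarefree, or $\gcd(u,v)\neq 1$, which is exactly the content we want to rule out. Call such a pair \emph{bad} and pick $(u_0,v_0)$ bad with $\deg(u_0)+\deg(v_0)$ minimum. If $u_0v_0\in G(IJ)$, then $u_0v_0$ is a non-squarefree minimal generator of $IJ$, an immediate contradiction. Otherwise $u_0v_0$ is properly divisible by some $w\in G(IJ)$, and writing $w=u'v'$ with $u'\in G(I)$, $v'\in G(J)$, the hypothesis that $IJ$ is squarefree ensures $u'v'$ is squarefree.

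The crux is to extract a smaller bad pair from $(u',v')$. The minimality of $u_0\in G(I)$ and $v_0\in G(J)$ forces both $u'\neq u_0$ and $v'\neq v_0$, since otherwise the other factor would properly divide a minimal generator. Because $u'$ and $u_0$ are distinct minimal generators of $I$ we have $u'\nmid u_0$, and the squarefreeness of $u'$ upgrades this to $\Supp(u')\not\subseteq \Supp(u_0)$. Choosing $x_j\in\Supp(u')\setminus\Supp(u_0)$, the relation $u' | u_0v_0$ forces $x_j\in\Supp(v_0)$, so $x_j^2 | u'v_0$ and $(u',v_0)$ is bad; symmetrically $(u_0,v')$ is bad. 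Finally $\deg(u')+\deg(v')<\deg(u_0)+\deg(v_0)$, so one of $\deg(u')<\deg(u_0)$ or $\deg(v')<\deg(v_0)$ holds, producing a strictly smaller bad pair and contradicting the minimality of $(u_0,v_0)$. The only subtle step is the implication that $u'$ squarefree together with $u'\nmid u_0$ forces $\Supp(u')\not\subseteq \Supp(u_0)$, which is the precise mechanism that converts non-minimality of $u_0v_0$ in $G(IJ)$ into a shared variable between $u'$ and $v_0$; everything else is bookkeeping.
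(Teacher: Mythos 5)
Your proof is correct. Both directions are handled properly: the reduction of ``$IJ$ squarefree'' to ``$uv$ squarefree for all $u\in G(I)$, $v\in G(J)$'' via the variable-by-variable count of exponents is right, and the descent argument is sound. In particular the key chain of implications checks out: every $w\in G(IJ)$ has the form $u'v'$ with $u'\in G(I)$, $v'\in G(J)$; squarefreeness of $u'v'$ gives squarefreeness of $u'$; $u'\neq u_0$ (else $v'$ would properly divide the minimal generator $v_0$); $u'\nmid u_0$ plus squarefreeness of $u'$ gives $\Supp(u')\not\subseteq\Supp(u_0)$; and $u'\mid u_0v_0$ then forces a variable shared by $u'$ and $v_0$, so $(u',v_0)$ is bad, and symmetrically $(u_0,v')$ is bad. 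Since $\deg(u')+\deg(v')<\deg(u_0)+\deg(v_0)$, one of these two pairs beats the minimal one.

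Your route differs from the paper's mainly in the choice of descent quantity. The paper also argues by a minimal counterexample and also invokes a squarefree generator $u'v'\in G(IJ)$ dividing $uv$, but it minimizes the cardinality of $\Supp(v)\setminus\Supp(u)$ rather than the total degree. That choice is asymmetric in $u$ and $v$ and forces a two-case analysis, including a slightly awkward base case $\Supp(v)\subseteq\Supp(u)$ in which the argument is driven to the degenerate conclusion $v'=1$ and $J=S$ before reaching a contradiction. Your induction on $\deg(u_0)+\deg(v_0)$ is symmetric: you produce \emph{two} bad pairs $(u',v_0)$ and $(u_0,v')$ and let the degree inequality decide which one witnesses the descent, which eliminates the case split and the degenerate branch. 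The only price is that you must verify both $u'\neq u_0$ and $v'\neq v_0$, which you do correctly. On balance your version is a cleaner proof of the same statement by essentially the same underlying mechanism.
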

  \begin{proof}
     The ``if'' part of the lemma is trivial. The other direction states that if $IJ$ is
     a squarefree monomial ideal then for every $u \in G(I)$ and every $v \in G(J)$ the monomial $uv$ is squarefree. 
     Assume by contradiction that there exist monomials $u \in G(I)$ and $v \in G(J)$ such that $uv$ is not squarefree. 
     Among every $v$ with this property we choose one such that the set $\Supp (v)\setminus \Supp (u)$ has minimal cardinality. 
     Since $IJ$ is a squarefree monomial ideal, there exist squarefree monomials $u' \in G(I)$ and $v' \in G(J)$ such that 
     $\Supp (u') \cap \Supp (v') = \emptyset$ and $u'v'$ divides $uv$. 
     We distinguish two cases:

     \noindent {\sf Case:}  $\Supp (v)\setminus \Supp (u) = \emptyset$ 

     The assumption is equivalent to $\Supp (v)\subseteq \Supp (u)$. We conclude that $\Supp (u') \subseteq \Supp (uv) = \Supp (u)$.
     But then $u'$ divides $u$ and hence $u = u'$. But then $\Supp (uv) = \Supp (u')$ and $v' = 1$. This implies
     $J = S$ and $IJ = I$ which in turn shows that $I$ is a squarefree monomial ideal. Thus $u$ is squarefree and $v = v' = 1$.
     But then $uv$ is squarefree and we arrive at a contradiction.
     
     \noindent {\sf Case:}  $\Supp (v)\setminus \Supp (u) = \{x_{i_1}, \ldots x_{i_t}\}$ for some $t \geq 1$

     If $\Supp (v') \subseteq \Supp (v)$ then $v'$ divides $v$ and hence $v = v'$. Then $u'$ must divide $u$ and hence $u = u'$.
     But this contradicts the fact that $uv = u'v'$ is not squarefree. Hence $\Supp (u) \cap \Supp (v') \neq \emptyset$ and therefore $uv'$ is not squarefree. 
     Now assume that $\{x_{i_1}, \ldots, x_{i_t}\} \subseteq \Supp (v')$. Then since $\Supp (u') \cap \Supp (v') =\emptyset$, it follows that 
     $\Supp (u')\subseteq \Supp (u)$ and therefore $u'$ divides $u$. As above this yields a contradiction. 
     Thus $\{x_{i_1}, \ldots, x_{i_t}\} \nsubseteq \Supp (v')$ and the inclusion $\Supp (v') \subseteq \Supp (u) \cup \Supp (v)$ implies that the cardinality 
     of $\Supp (v')\setminus \Supp (u)$ is strictly less than the cardinality of $\Supp (v)\setminus \Supp (u)$, which contradicts the choice of $v$.
  \end{proof}
  
  Let us analyze the situation when $IJ$ is a squarefree monomial ideal more carefully. By \ref{lem:squa}, we may assume that $I = I_{\Delta}$ and $J = I_{\Delta'}$ 
  with simplicial complexes $\Delta$ and $\Delta'$
  that have a join decomposition $\Delta = 2^{V_1} * \Delta_1$ and $\Delta' = 2^{V_2} * \Delta_2$ for simplicial complexes $\Delta_1$ and $\Delta_2$ over disjoint ground
  sets and full simplices $2^{V_1}$ and $2^{V_2}$ over arbitrary finite sets $V_1$ and $V_2$. The product of the corresponding Stanley--Reisner ideals is easily 
  described. 

  \begin{Lemma}
    \label{lem:join}
    Let $\Delta_i$, $i=1,2$, be simplicial complexes on disjoint ground sets $\Omega_1$ and
    $\Omega_2$. Then $I_{\Delta_1} I_{\Delta_2} = I_{(\Delta_1^\dual * \Delta_2^\dual)^\dual}$.
  \end{Lemma}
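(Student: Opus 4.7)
The plan is to compare minimal monomial generating sets of the two ideals (viewed inside the polynomial ring on variables indexed by $\Omega_1 \sqcup \Omega_2$) and show that they agree. The main tool is the standard bijection: for a simplicial complex $\Gamma$ on ground set $[n]$, the facets of $\Gamma^\dual$ are precisely the complements in $[n]$ of the minimal non-faces of $\Gamma$. This is immediate from the definition $\Gamma^\dual = \{A \subseteq [n] : [n] \setminus A \notin \Gamma\}$.

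First I would describe the generators of $I_{\Delta_1} I_{\Delta_2}$. By definition of the product these are the monomials $x_{N_1} \cdot x_{N_2}$ with $N_i$ a minimal non-face of $\Delta_i$, which, since $\Omega_1 \cap \Omega_2 = \emptyset$, equals $x_{N_1 \cup N_2}$. Disjointness also implies that $x_{N_1 \cup N_2}$ divides $x_{N_1' \cup N_2'}$ if and only if $N_i \subseteq N_i'$ for each $i$; minimality of each $N_i'$ as a non-face of $\Delta_i$ then forces equality, so these monomials are in fact a minimal generating set of $I_{\Delta_1} I_{\Delta_2}$.

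Next I would describe the minimal non-faces of $(\Delta_1^\dual * \Delta_2^\dual)^\dual$ on the ground set $\Omega_1 \sqcup \Omega_2$. The facets of the join $\Delta_1^\dual * \Delta_2^\dual$ are exactly the sets $F_1 \cup F_2$ with $F_i$ a facet of $\Delta_i^\dual$. Applying the facet--non-face bijection, the minimal non-faces of $(\Delta_1^\dual * \Delta_2^\dual)^\dual$ are the complements $(\Omega_1 \sqcup \Omega_2) \setminus (F_1 \cup F_2) = (\Omega_1 \setminus F_1) \cup (\Omega_2 \setminus F_2)$, and applying the same bijection within each $\Omega_i$ rewrites $\Omega_i \setminus F_i$ as an arbitrary minimal non-face $N_i$ of $\Delta_i$.

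Comparing the two descriptions, both ideals are minimally generated by the set $\{x_{N_1 \cup N_2} : N_i \text{ a minimal non-face of } \Delta_i\}$ and are therefore equal. There is no real obstacle here; the only subtle point is keeping track of the ground set in which complements are taken, which is handled cleanly because $\Omega_1 \sqcup \Omega_2$ is a disjoint union and the facet/non-face bijection is compatible with joins.
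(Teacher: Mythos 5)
Your proof is correct and follows essentially the same route as the paper's: both identify the generators of $I_{\Delta_1}I_{\Delta_2}$ as the monomials $x_{N_1\cup N_2}$ and match them, via the facet/minimal-non-face duality and the description of facets of a join, with the minimal non-faces of $(\Delta_1^\dual * \Delta_2^\dual)^\dual$. The only difference is that you additionally verify minimality of the generating set, which the paper omits but which is not needed for the equality of ideals.
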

  \begin{proof}
     Recall that by definition of the Alexander dual, for every simplicial complex $\Delta$ on the ground set $\Omega$, the maximal faces $A \in \Delta^\dual$ are those subsets 
     $A$ of $\Omega$ for which
     $\Omega \setminus A$ is a minimal non-face of $\Delta$. 
 
     The product $I_{\Delta_1} I_{\Delta_2}$ is generated by $x_{N_1}x_{N_2}$ for minimal non--faces
     $N_1$ and $N_2$ of $\Delta_1$ and $\Delta_2$ respectively. Hence it is generated
     by monomials corresponding to the union of $F_1 = \Omega_1 \setminus N_1$ and $F_2 = \Omega_2 \setminus N_2$ of 
     maximal faces of $\Delta_1^\dual$ and $\Delta_2^\dual$. Since for any maximal face $F$ of 
     $\Delta_1^\dual * \Delta_2^\dual$ there are maximal faces $F_1$ of $\Delta_1^\dual$ and
     $F_2$ of $\Delta_2^\dual$ such that $F = F_1 \cup F_2$ it follows that the monomials 
     $x_{N_1 \cup N_2} = x_{N_1} x_{N_2}$ for minimal non-faces $N_1$ of $\Delta_1$ and 
     $N_2$ of $\Delta_2$ are the generators $I_{(\Delta_1^\dual * \Delta_2^\dual)^\dual}$.
     Now the assertion follows.
  \end{proof}

  Now combining \ref{lem:join} and \ref{cor:mom1} (i) yields \ref{cor:join}.

  We note, that for the deduction of \ref{cor:join} one could have also argued using 
  \cite[Satz 2]{HerzogSteurich} instead of \ref{thm:prod}.




\end{document}